\theoremstyle{plain}
\theoremstyle{definition}
\theoremstyle{remark}
\newcommand{\R}{\mathbb{R}}
\newcommand{\ba}{\begin{aligned}}
\newcommand{\ea}{\end{aligned}}
\newcommand{\bt}{\begin{thm}}
\newcommand{\et}{\end{thm}}
\newcommand{\bc}{\begin{corollary}}
\newcommand{\ec}{\end{corollary}}
\newcommand{\bl}{\begin{lemma}}
\newcommand{\el}{\end{lemma}}
\newcommand{\bpf}{\begin{proof}}
\newcommand{\epf}{\end{proof}}
\newcommand{\bpb}{\begin{problem}}
\newcommand{\epb}{\end{problem}}
\newcommand{\bd}{\begin{definition}}
\newcommand{\ed}{\end{definition}}
\newcommand{\bn}{\begin{note}}
\newcommand{\en}{\end{note}}
\newcommand{\bp}{\begin{proposition}}
\newcommand{\ep}{\end{proposition}}
\newcommand{\be}{\begin{example}}
\newcommand{\ee}{\end{example}}
\newcommand{\bex}{\begin{exercise}}
\newcommand{\eex}{\end{exercise}}
\theoremstyle{plain}
\newtheorem{thm}{Theorem}[section]
\newtheorem{lemma}[thm]{Lemma}
\newtheorem{corollary}[thm]{Corollary}
\newtheorem{proposition}[thm]{Proposition}
\newtheorem{exercise}[thm]{Exercise}
\newtheorem{problem}[thm]{Problem}
\theoremstyle{definition}
\newtheorem{definition}[thm]{Definition}
\newtheorem{remark}[thm]{Remark}
\newtheorem{example}[thm]{Example}
\newtheorem {note}{Note}[section]
\theoremstyle{remark}
\theoremstyle{plain}
\title{Harnack Estimate for \\ the Endangered Species Equation}
\date{\today}
\begin{document}

\title[]{Harnack Estimate for \\ the Endangered Species Equation}

\author[]{Xiaodong Cao}%$^*$}
\address[]{ Department of Mathematics, Cornell University\\ Ithaca, NY 14853-4201, USA}
\email{cao@math.cornell.edu}
%\thanks{$^*$Research was supported in part by NSF Grant DMS-????}
\date{\today}

\author[]{Mark Cerenzia}%$^\dagger$}%\thanks{$^\dagger$Research was supported in part by NSF Grant DMS-0739164}
\address[]{ Department of Mathematics, Cornell University\\ Ithaca, NY 14853-4201, USA}
%    Current address
\curraddr{Department of Operations Research and Financial Engineering, Princeton University, Princeton, NJ 08544}
%    \thanks will become a 1st page footnote.
%\thanks{}
\email{ cerenzia@princeton.edu}

\author[]{Demetre Kazaras}%$^\ddagger$}
\address[]{ Department of Mathematics, University of Oregon\\ Eugene, OR 97403-1222, USA}
\email{demetre@uoregon.edu}
%\thanks{$^\ddagger$Research was supported in part by NSF Grant DMS-????}
\date{\today}

\keywords{differential Harnack inequality; endangered species equation.}

%\date{\today}
%\subjclass{57 N 15,  57 R 67}

\maketitle

\begin{abstract}
We prove a differential Harnack inequality for  the \emph{Endangered Species Equation}, which is a nonlinear parabolic equation. Our derivation relies on an idea related to the parabolic maximum principle. As an application of this inequality, we will show that positive solutions to this equation must blowup in finite time.
\end{abstract}

\section{ Introduction}\label{intro}
We consider positive smooth solutions $f(x,t) : \R^n\times [0,\infty)\to \mathbb{R}$ to the following Cauchy problem:
\begin{equation} \label{gese}
\begin{cases}
\frac{\partial }{\partial t} f=\Delta f + f^p,\\
f(x,0)=f_0(x),
\end{cases}
\end{equation}
for $p>1$. It is well known that solutions for (\ref{gese}) may blow up in finite time. There are some early studies in \cite{kaplan63, friedman65, ito66} for certain quasi-linear parabolic equations that show finite time blow up under suitable conditions. H. Fujita \cite{fujita66} studied equation \eqref{gese} as an example of a general quasi-linear parabolic partial differential equation whose failure for long-time existence depends on both the spacial dimension $n$ and the power $p$,  but not on the (positive) initial value $f_0$, provided that $0<n(p-1)<2$. In \cite{hamilton11}, R. Hamilton  labeled \eqref{gese} as the  \emph{Endangered Species Equation (ESE)} since one may think of $f(x,t)$ as the population density of a certain species evolving in time t. The population evolves according to diffusion  (the term $\Delta f$), but the equation also incorporates an additional change (the term $f^p$) in population that results from a pair's meeting.  \\

Our main result Theorem \ref{maintheorem} is a differential Harnack inequality (also known as a Li-Yau type estimate). As an application, we find that \emph{any} positive solution with positive initial condition becomes unbounded in finite time. So the population becomes arbitrarily large no matter how endangered initially. Our approach thus provides a new derivation of Theorem 1 in Fujita \cite{fujita66} (our Theorem \ref{mainthm}). In addition, upon integrating this inequality along a space-time path, we can recover a classical Harnack inequality (Corollary \ref{cor.class}). 

The importance of parabolic Harnack inequalities, introduced in \cite{moser64, moser67}, is well-established.  Classical applications include deriving Holder continuity, obtaining Gaussian bounds for the heat kernel, and drawing many other conclusions about the underlying geometry of the space. While the study of differential Harnack inequalities and applications  originated in P. Li and S.-T. Yau \cite{ly86} (see also D. Aronson and P. B\'{e}nilan \cite{ab79} for a precursory form), this method was later brought into the study of geometric flows by Hamilton and played an important role  in the field, especially for the study of the Ricci flow (see for example, \cite{hamilton93}).   A more sophisticated use of such estimates for applications in geometry can be found in the details of the program for three dimensional geometrization.  This leads to G. Perelman's differential Harnack inequality  \cite[Corollary 9.3]{perelman1}, which is a crucial step in his solution to the Poincar\'e Conjecture. Since then, a systematic method to find Harnack inequality for geometric evolution equations was developed  in \cite{cao08, ch09}.\\

One of the main motivations in writing this paper is to suggest that the method developed in geometric flows can also be used for the study of  long-time existence (or non-existence)  for nonlinear parabolic equations, especially for finite-time blow-ups. In \cite{ab79}, the estimate was used to prove existence of solutions. For a probabilistic analysis of the Dirichlet problem associated to the endangered species equation with $1<p\leq 2$, see E.B. Dynkin \cite{dynkin92, dynkin93}. In his work, the solution $f$ of \eqref{gese} appears in the expression for the Laplace functional of a certain measure-valued Markov process, a so-called superprocess. Another goal of this paper is to generalize the estimate in \cite{hamilton11}, which also partially answers a question of Hamilton.\\

\paragraph{\textbf{Acknowledgements:}}  X. CaoÕs research was partially supported by NSF grant DMS 0904432. M. Cerenzia and D. Kazaras's research was supported by NSF grant DMS 0648208, through the Research Experience for Undergraduates Program at Cornell University. Note that any opinions, findings, and conclusions or recommendations expressed in this material are those of the authors and do not necessarily reflect the views of the National Science Foundation.

The authors would like to thank the referee and Dr. Hao Jia for their helpful comments and suggestions on an earlier version of this article. They would also like to thank Professor Robert Strichartz for his encouragement.
%%%%%%%%%%%%%%%%
\section{Harnack Estimate}%%%
%%%%%%%%%%%%%%%%
In this section, we shall first derive our differential Harnack estimate. 
Let $f(x,t) \in C^{\infty}(\mathbb{R}^n\times[0,\infty))$ be a positive solution to \eqref{gese} and  $u:=\log f$, then we have
$$
u_t = \Delta u + \left \vert \nabla u \right \vert ^2 + e^{u(p-1)}.
$$
The main technical result, Theorem 2.2, relies upon calculation of the evolution of a Harnack quantity and use of  the parabolic maximum principle. The main object of our study is the following Harnack quantity
\begin{equation}\label{hq1}
H:= \alpha \Delta u + \beta \left \vert \nabla u \right \vert ^2 + c e^{u(p-1)} + \phi,
\end{equation}
where $\alpha, \beta, c\in\mathbb{R}$ and $\phi:\mathbb{R}^n\times[0,\infty)\to[0,\infty)$ will be chosen suitably later.  Our first task is to compute the evolution of $H$.

\begin{lemma}\label{mainlemma}
Suppose $f(x,t)$ is a positive solution to \eqref{gese}, $u=\log f$ and $H$ is defined as in \eqref{hq1}. Then we have 
\begin{align}\label{mainevolu}\nonumber 
H_t = & \Delta H + 2 \nabla H \cdot \nabla u + (p-1)e^{u(p-1)}H + 2(\alpha - \beta) \left \vert \nabla \nabla u \right \vert ^2 \\ \nonumber
& +\left ( \alpha (p-1) + \beta - cp  \right ) (p-1)e^{u(p-1)} \mid \nabla u \mid^2 \\
&- (p-1)e^{u(p-1)} \phi + \phi_t - \Delta  \phi - 2 \nabla  \phi \cdot \nabla u .
\end{align}
\end{lemma}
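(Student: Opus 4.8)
The plan is a direct computation: differentiate $H$ in time using $u_t = \Delta u + |\nabla u|^2 + e^{u(p-1)}$, and then reorganize the result so as to recognize the drift operator $\Delta + 2\nabla u\cdot\nabla$ applied to $H$, together with the reaction term $(p-1)e^{u(p-1)}H$. No conceptual input is needed beyond two standard identities; the work is bookkeeping.

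First I would record the auxiliary identities. Writing $v := e^{u(p-1)}$, we have $\nabla v = (p-1)v\,\nabla u$ and $\Delta v = (p-1)v\bigl(\Delta u + (p-1)|\nabla u|^2\bigr)$. Since we work on flat $\R^n$, the Bochner formula carries no curvature term: $\Delta|\nabla u|^2 = 2|\nabla\nabla u|^2 + 2\nabla u\cdot\nabla\Delta u$. Because $\partial_t$ commutes with $\Delta$ and with $\nabla$, differentiating the equation for $u$ gives $(\Delta u)_t = \Delta\bigl(\Delta u + |\nabla u|^2 + v\bigr)$, $(|\nabla u|^2)_t = 2\nabla u\cdot\nabla\bigl(\Delta u + |\nabla u|^2 + v\bigr)$, and $v_t = (p-1)v\bigl(\Delta u + |\nabla u|^2 + v\bigr)$.

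Next I assemble $H_t = \alpha(\Delta u)_t + \beta(|\nabla u|^2)_t + c\,v_t + \phi_t$ and subtract $\Delta H + 2\nabla u\cdot\nabla H$, where $\Delta H = \alpha\Delta\Delta u + \beta\Delta|\nabla u|^2 + c\Delta v + \Delta\phi$ and $2\nabla u\cdot\nabla H = 2\alpha\nabla u\cdot\nabla\Delta u + 2\beta\nabla u\cdot\nabla|\nabla u|^2 + 2c\nabla u\cdot\nabla v + 2\nabla u\cdot\nabla\phi$. The fourth-order pieces $\alpha\Delta\Delta u$ cancel, the pieces $2\beta\nabla u\cdot\nabla|\nabla u|^2$ cancel, and the $\phi$-terms collapse exactly to $\phi_t - \Delta\phi - 2\nabla\phi\cdot\nabla u$. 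What is left is a combination of $\Delta|\nabla u|^2$, $\nabla u\cdot\nabla\Delta u$, $\Delta v$, $\nabla u\cdot\nabla v$, and the reaction pieces from $c\,v_t$. Applying the Bochner formula turns the $(\alpha-\beta)\Delta|\nabla u|^2$ contribution into $2(\alpha-\beta)|\nabla\nabla u|^2$ plus a multiple of $\nabla u\cdot\nabla\Delta u$ that exactly cancels the surviving third-order term (the coefficients appear as $(\alpha-\beta)$ against $(\beta-\alpha)$). Substituting $\nabla v = (p-1)v\nabla u$ and $\Delta v = (p-1)v(\Delta u + (p-1)|\nabla u|^2)$ then converts every $v$-term into a multiple of $v\Delta u$, $v|\nabla u|^2$, or $v^2$; regrouping the $v\Delta u$ and $v^2$ contributions reassembles $(p-1)v\bigl(\alpha\Delta u + \beta|\nabla u|^2 + cv\bigr)$, and adding and subtracting $(p-1)v\phi$ promotes this to $(p-1)v\,H - (p-1)v\,\phi$, which is precisely the form in the statement.

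The computation has no real obstacle, only the risk of an arithmetic slip. The step I would check most carefully is the coefficient of $e^{u(p-1)}|\nabla u|^2$: it collects contributions from $2(\beta-c)\,\nabla u\cdot\nabla v$, from the $(p-1)^2$ term inside $(\alpha-c)\Delta v$, and from the reaction term in $c\,v_t$, and all of these must collapse to exactly $\bigl(\alpha(p-1)+\beta-cp\bigr)(p-1)$ once the $\beta(p-1)$ part has been absorbed into $(p-1)v\,H$. The other delicate point is the cancellation of the $\nabla u\cdot\nabla\Delta u$ term, which works only because of the specific coefficient structure produced by the Bochner formula and the $\beta$-term.
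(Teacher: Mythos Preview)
Your proposal is correct and follows essentially the same approach as the paper: both are direct computations that rely on the Bochner identity $\Delta|\nabla u|^2 = 2|\nabla\nabla u|^2 + 2\nabla u\cdot\nabla\Delta u$ and the evolution $u_t = \Delta u + |\nabla u|^2 + e^{u(p-1)}$, differing only cosmetically in that the paper first records the separate evolutions of $\Delta u$ and $|\nabla u|^2$ before assembling $H_t$, whereas you form $H_t - \Delta H - 2\nabla u\cdot\nabla H$ directly. The coefficient bookkeeping you flag (the cancellation of $\nabla u\cdot\nabla\Delta u$ via $(\alpha-\beta)+(\beta-\alpha)=0$ and the collapse to $(\alpha(p-1)+\beta-cp)(p-1)$ in front of $e^{u(p-1)}|\nabla u|^2$) checks out exactly.
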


\begin{proof}The proof follows from direct calculation.  First note the evolution equations:
$$
\partial_t (\mid \nabla u \mid^2) = \Delta \mid \nabla u \mid^2 - 2\mid \nabla \nabla u \mid^2 + 2 \nabla \mid \nabla u \mid^2 \cdot \nabla u + 2 (p-1) e^{u(p-1)} \mid \nabla u \mid^2,
$$
and
$$
\partial_t (\Delta u) =  \Delta (\Delta u) + \Delta \mid \nabla u \mid^2 + (p-1) e^{u(p-1)} \Delta u + (p-1)^2 e^{u(p-1)} \mid \nabla u \mid^2.
$$ 
In the above, we use
\begin{equation}
\label{ricci2}
\Delta \mid \nabla u \mid^2 =  2\nabla u \cdot \nabla \Delta u +  2 \mid \nabla \nabla u \mid^2.
\end{equation}
Hence we  have
\begin{align}\nonumber 
H_t = &
\alpha \left [ \Delta (\Delta u) + \Delta\left \vert \nabla u \right \vert ^2  + (p-1) e^{u(p-1)} \Delta u + (p-1)^2 e^{u(p-1)} \left \vert \nabla u \right \vert ^2  \right ] \\ \nonumber
& + \beta \left [ \Delta \left \vert \nabla u \right \vert ^2  - 2\left \vert \nabla \nabla u \right \vert ^2  + 2 \nabla \left \vert \nabla u \right \vert ^2 \cdot \nabla u + 2 (p-1) e^{u(p-1)} \left \vert \nabla u \right \vert ^2  \right ] \\ \nonumber 
&+c(p-1)e^{u(p-1)}\left [ \Delta u + \left \vert \nabla u \right \vert ^2 + e^{u(p-1)}\right ]  +\phi_t .
\end{align}
Using (\ref{ricci2}) again, we arrive at
\begin{align}\nonumber 
H_t = &\Delta H + 2 \nabla H \cdot \nabla u
+ \alpha [  2|\nabla \nabla u | ^2  + (p-1) e^{u(p-1)} \Delta u  
+ (p-1)^2 e^{u(p-1)} \left \vert \nabla u \right \vert ^2  ]\\ \nonumber 
&+ \beta \left [2 (p-1) e^{u(p-1)}\left \vert \nabla u \right \vert ^2 - 2\left \vert \nabla \nabla u \right \vert ^2   \right ] 
+ c(p-1)e^{2u(p-1)}\\ \nonumber & - c(p-1)e^{u(p-1)}\left \vert \nabla u \right \vert ^2 - c(p-1)^2e^{u(p-1)}\left \vert \nabla u \right \vert ^2  + \phi_t - \Delta  \phi - 2 \nabla  \phi \cdot \nabla u.
\end{align}
The lemma then follows upon expanding and reordering. 
%\begin{align*}\nonumber 
%H_t = & \Delta H + 2 \nabla H \cdot \nabla u + (p-1)e^{u(p-1)}H + 2(\alpha - \beta) \left \vert \nabla %\nabla u \right \vert ^2 \\ \nonumber
%& +\left ( \alpha (p-1) + \beta - cp  \right ) (p-1)e^{u(p-1)} \mid \nabla u \mid^2 \\
%&- (p-1)e^{u(p-1)} \phi + \phi_t - \Delta  \phi - 2 \nabla  \phi \cdot \nabla u 
%\end{align*}
\end{proof}

\begin{thm}\label{maintheorem} 
Suppose $f(x,t)$ is a positive solution to $\eqref{gese}$ and  $u=\log f$.   If $\alpha, \beta, a$ and $c$  satisfy 
\begin{equation}\label{cond}
\alpha > \beta\geq 0, \ \ \frac{\alpha(p-1)+2\beta}{p}\geq c \geq \frac{(p-1)n\alpha^2}{4(\alpha-\beta)},
\end{equation} and
\begin{equation}\label{conda}
a\geq \frac{n\alpha^2}{2(\alpha-\beta)}>0,
\end{equation}
then we have
\begin{equation}
\label{harnack}
H_0\equiv \alpha \Delta u + \beta \left \vert \nabla u \right \vert ^2 + c e^{u(p-1)} + \frac{a}{t} \geq0
\end{equation}
 for all $t$. 
\end{thm}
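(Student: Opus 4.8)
The plan is to apply the parabolic maximum principle to the evolution equation \eqref{mainevolu} from Lemma \ref{mainlemma} with the specific choice $\phi = a/t$. First I would substitute $\phi = a/t$, $\phi_t = -a/t^2$, and $\Delta\phi = \nabla\phi = 0$ into \eqref{mainevolu}, so that the Harnack quantity becomes $H = H_0$ as in \eqref{harnack} and its evolution reads
\begin{align*}
(H_0)_t = {}& \Delta H_0 + 2\nabla H_0\cdot\nabla u + (p-1)e^{u(p-1)}H_0 + 2(\alpha-\beta)|\nabla\nabla u|^2 \\
& + \bigl(\alpha(p-1)+\beta-cp\bigr)(p-1)e^{u(p-1)}|\nabla u|^2 - (p-1)e^{u(p-1)}\frac{a}{t} - \frac{a}{t^2}.
\end{align*}
The strategy is to show that whenever $H_0 = 0$ (and it is achieved as a spatial minimum, so $\nabla H_0 = 0$ and $\Delta H_0 \geq 0$), the right-hand side is strictly positive, which forces $H_0$ to stay nonnegative. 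Since $H_0 \to +\infty$ as $t\to 0^+$ because of the $a/t$ term, the quantity starts positive, and the maximum principle then propagates nonnegativity forward in time.

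The key algebraic step is to bound the ``bad'' terms $-(p-1)e^{u(p-1)}\frac{a}{t} - \frac{a}{t^2}$ from below using the ``good'' terms $2(\alpha-\beta)|\nabla\nabla u|^2$ and $\bigl(\alpha(p-1)+\beta-cp\bigr)(p-1)e^{u(p-1)}|\nabla u|^2$ on the set $\{H_0 = 0\}$. On that set we have $\alpha\Delta u = -\beta|\nabla u|^2 - ce^{u(p-1)} - a/t$. I would use the Cauchy--Schwarz-type inequality $|\nabla\nabla u|^2 \geq \frac{(\Delta u)^2}{n}$ to convert the Hessian term into $(\Delta u)^2$, then substitute the expression for $\Delta u$ coming from $H_0 = 0$. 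This produces a quadratic form in the three quantities $|\nabla u|^2$, $e^{u(p-1)}$, and $a/t$. The condition $c \geq \frac{(p-1)n\alpha^2}{4(\alpha-\beta)}$ is precisely what is needed so that the coefficient structure lets the $\frac{2(\alpha-\beta)}{n}(\Delta u)^2$ term absorb the negative $\frac{a}{t^2}$ term (after completing the square in $a/t$), while the sign condition $c\leq\frac{\alpha(p-1)+2\beta}{p}$ — equivalently $\alpha(p-1)+\beta-cp \geq -\beta$, combined with the other half $\alpha(p-1)+\beta-cp$ controlling the $|\nabla u|^2$ terms — together with $a\geq\frac{n\alpha^2}{2(\alpha-\beta)}$ handles the cross terms and the $-(p-1)e^{u(p-1)}a/t$ term. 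The hypotheses $\alpha>\beta\geq 0$ guarantee $\alpha-\beta>0$ so the Hessian term genuinely helps.

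The main obstacle I anticipate is the bookkeeping of this quadratic form: after the substitutions one has an expression that must be shown to be nonnegative (ideally strictly positive off a negligible set) for \emph{all} values of the nonnegative variables $|\nabla u|^2$ and $e^{u(p-1)}$ and the positive variable $a/t$, and verifying this reduces to checking that a certain $2\times 2$ or $3\times 3$ symmetric matrix is positive semidefinite under exactly the stated inequalities \eqref{cond}--\eqref{conda}. Getting the constants to line up — in particular seeing that the threshold $\frac{(p-1)n\alpha^2}{4(\alpha-\beta)}$ for $c$ and $\frac{n\alpha^2}{2(\alpha-\beta)}$ for $a$ are the sharp ones — is where the real work lies.

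A technical point worth addressing: to run the maximum principle rigorously on the noncompact domain $\R^n$ one should either assume suitable decay of $f$ at spatial infinity, or localize using a cutoff function and a standard exhaustion argument; I would remark that under the smoothness and positivity assumptions in force this is routine and suppress the details, or invoke the version of the maximum principle for complete manifolds with the relevant growth bounds. With that in place, the conclusion \eqref{harnack} follows for all $t>0$.
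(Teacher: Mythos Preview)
Your core strategy --- set $\phi=a/t$, use Lemma \ref{mainlemma}, apply Cauchy--Schwarz in the form $|\nabla\nabla u|^2\ge\frac1n(\Delta u)^2$, substitute $\Delta u=-\frac1\alpha(\beta|\nabla u|^2+ce^{u(p-1)}+a/t)$ at a first zero, and verify the resulting quadratic form in $X=e^{u(p-1)}$, $Y=|\nabla u|^2$, $a/t$ is nonnegative under \eqref{cond}--\eqref{conda} --- is exactly the paper's argument, and your reading of how each inequality in \eqref{cond}--\eqref{conda} enters is correct.

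The one substantive divergence is what you call a ``technical point'': the paper does \emph{not} treat the noncompactness of $\R^n$ as routine, and in fact most of the written proof is devoted to it. Rather than invoking a black-box maximum principle with growth hypotheses (none are assumed on $f$), the paper replaces $a/t$ by an explicit barrier
\[
\phi_R(x,t)=\frac{a}{t}+\sum_{k=1}^n\Bigl(\frac{b}{(x_k-p_k)^2}+\frac{b}{(q_k-x_k)^2}\Bigr)
\]
on a rectangle $R$, so that $H_R\to+\infty$ at the parabolic boundary and a first interior zero genuinely exists if $H_R$ ever fails to be positive. This introduces new terms $-\Delta\phi_R$ and $-2\nabla\phi_R\cdot\nabla u$ into the contradiction inequality; absorbing $-2\nabla\phi_R\cdot\nabla u$ uses the $\frac{4(\alpha-\beta)\beta}{n\alpha^2}\phi_R Y$ term and is why $\beta>0$ is needed during the argument (and only sent to $0$ afterward). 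One then checks that $b$ can be chosen large enough that $A\phi_R^2-\Delta\phi_R-B|\nabla\phi_R|^2/\phi_R+(\phi_R)_t>0$, and finally lets $R\nearrow\R^n$. So your outline is right, but the step you propose to ``suppress'' is where the actual work in the paper's proof lies.
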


\begin{remark}
Our conditions \eqref{cond} allow us to choose $\beta=0$ in \eqref{harnack}. However, while the argument in the proof below requires that $\beta>0$ initially, we can let $\beta \to 0$ at the end. \end{remark}

\begin{remark}
We will see that the inequality (\ref{condnp}) in the proof below exhibits a restriction on the power $p$ with respect to the spacial dimension $n$ similar to the primary condition of  Theorem 1 in Fujita \cite{fujita66}.
\end{remark}

\begin{remark}
This result also gives a partial answer to Question 4 in Hamilton \cite{hamilton11}.
\end{remark}

\begin{proof}
Choose $\phi_R(x,t)$ so that it is defined on the n-rectangle $R \subset \R^n$ made up of a cartesian product of $n$ intervals $[p_i, q_i]$ so that $\phi_R \rightarrow \infty$ if $x_i \rightarrow p_i, \ q_i $ or if $t \rightarrow 0$. More explicitly, we may take
\begin{equation}\label{defphi}
\phi_R(x,t) = \frac{a}{t} + \sum_{k=1}^n \left ( \frac{b}{(x_k - p_k)^2} + \frac{b}{(q_k - x_k)^2}    \right ),
\end{equation}
for $t>0$ and $x = (x_1, \ldots, x_n) \in R = \Pi_1^n [p_i, q_i]$, and extend it to be $\infty$ elsewhere. The corresponding Harnack quantity is
$$H_R= \alpha \Delta u + \beta \left \vert \nabla u \right \vert ^2 + c e^{u(p-1)} + \phi_R(x,t).$$
Note that $H_R \to H_0$ as the rectangle $R= \Pi_1^n [p_i, q_i]$  exhausts $\R^n$, and  $H_R>0$ for small $t$ since $\phi_R \to \infty$ as $t\to0$. 

For the sake of contradiction, assume that there exists a first time $t_0$ and point $x_0 \in R$ where $H_R(x_0, t_0) = 0$. At $(x_0,t_0)$, we have
$$
(H_R)_t \leq 0, \ \ \ \nabla H_R = 0, \ \ \ \Delta H_R \geq  0,
$$
and 
$$
\Delta u = -\frac{1}{\alpha} (\beta \left \vert \nabla u \right \vert ^2  + c e^{u(p-1)} + \phi_R).
$$
Applying Lemma 2.1 and Cauchy-Schwarz in the form $\left \vert \nabla \nabla u \right \vert ^2 \geq \frac{1}{n} (\Delta u)^2$ yields that
\begin{align} \label{expression}\nonumber
0  \geq &\frac{2(\alpha - \beta)}{n\alpha^2} [\beta \left \vert \nabla u \right \vert ^2  + c e^{u(p-1)} + \phi_R]^2   - (p-1)e^{u(p-1)} \phi_R \\
& + \left [ \alpha (p-1) + \beta - cp  \right ] (p-1)e^{u(p-1)} \mid \nabla u \mid^2 + (\phi_R)_t - \Delta  \phi_R - 2 \nabla  \phi_R \cdot \nabla u.
\end{align}
Set $X = e^{u(p-1)}$ and $Y= \left \vert \nabla u \right \vert^2 $. Expanding and combining terms gives
\begin{align}\label{eq6} \nonumber
0 \geq &\frac{2(\alpha-\beta)}{n \alpha^2}(c^2 X^2+\beta^2 Y^2) + \left [\alpha (p-1) - c p + \beta + \frac{4(\alpha-\beta)\beta c}{n \alpha^2 (p-1)} \right](p-1) XY  \\ 
 &+ \left [ \frac{4(\alpha-\beta)c}{n \alpha^2} - (p-1) \right]\phi_R X + \frac{4(\alpha-\beta)\beta }{n \alpha^2} \phi_R Y \\\nonumber
 &+ (\phi_R)_t  - \Delta  \phi_R - 2 \nabla  \phi_R \cdot \nabla u+ \frac{2(\alpha-\beta)}{n \alpha^2}\phi_R^2.
\end{align}
We now claim that the right hand side is in fact positive, which will give us a contradiction. First note the quadratic inequality
$$
\frac{4(\alpha-\beta) \beta }{n\alpha^2}  \phi_R Y - 2 \nabla  \phi_R \cdot \nabla u \geq \frac{-n \alpha^2 \mid  \nabla \phi_R \mid^2}{4 (\alpha-\beta) \beta \phi_R}.
$$
Given (\ref{cond}), it follows that
\begin{equation}\label{cond1}
\alpha (p-1) - c p + \beta + \frac{4(\alpha-\beta)\beta c}{n \alpha^2 (p-1)} \geq 0,\quad
\frac{4(\alpha-\beta)c}{n \alpha^2} - (p-1)\geq 0.
\end{equation}
Dropping several nonnegative terms in the right hand side of (\ref{eq6}), we  arrive at

$$
0 \geq (\phi_R)_t  - \Delta  \phi_R - \frac{n \alpha^2 \mid  \nabla \phi_R \mid^2}{4 (\alpha-\beta) \beta \phi_R} + \frac{2(\alpha-\beta)}{n \alpha^2}\phi_R^2.
$$
%We will contradict this inequality by choosing $\phi(x,t)$ as
%$$
%\phi(x,t) = \frac{a}{t} + \sum_{k=1}^n \left ( \frac{b}{(x_k - p_k)^2} + \frac{b}{(q_k - x_k)^2}    \right ),
%$$
%on the n-rectangle with sides $[p_k,q_k]$, i.e., for 
%$$
%x = (x_1, \ldots, x_n) \in [p_1,q_1] \times \cdots \times [p_n,q_n].
%$$ 
We then compute
$$
\Delta \phi_R = \sum_{i=1}^n \left ( \frac{6b}{(x_k-p_k)^4}+  \frac{6b}{(q_k-x_k)^4} \right ),
$$
$$
|\nabla \phi_R |^2 =  \sum_{k=1}^n \left( -  \frac{2b}{(x_k-p_k)^3}+\frac{2b}{(q_k-x_k)^3}  \right )^2,
$$
and observe that
\begin{align}\label{gradest}\nonumber
\frac{|\nabla \phi_R|^2}{\phi_R} & = \sum_{k=1}^n \left(  - \frac{2b}{(x_k-p_k)^3 \sqrt{\phi_R}}+\frac{2b}{(q_k-x_k)^3 \sqrt{\phi_R}}  \right )^2 \\
& \leq \sum_{k=1}^n \left(   \frac{2\sqrt{b}}{(x_k-p_k)^2 }+\frac{2\sqrt{b}}{(q_k-x_k)^2 }  \right )^2.
\end{align}
Set 
$$
 A :=  \frac{2(\alpha-\beta)}{n \alpha^2}>0 ,\ \ \ \ B:= \frac{n\alpha^2}{4(\alpha-\beta)\beta}>0.
$$ 
To arrive at a contradiction, it suffices to show that
\begin{equation}\label{eq12}
A \phi_R^2 - \Delta \phi_R - B \frac{ \mid \nabla \phi_R \mid^2}{\phi_R} + (\phi_R)_t>0.
\end{equation}
Now we choose $a$ as in (\ref{conda}), so that $Aa^2-a\geq 0$. Next, plugging (\ref{gradest}) and   (\ref{defphi}) into (\ref{eq12}), we conclude that it is sufficient to have  $b>0$ and 
$$
Ab^2-b(6+4B) >0 ,
$$
which reduces to 
$$
b>\frac{1}{A}(6+4B).
$$
In summary, the conditions on $a$ and $b$ are 
$$
a\geq  \frac{n\alpha^2}{2(\alpha-\beta)}, \ \ \ b> \frac{n\alpha^2}{2(\alpha-\beta)}\left[6+\frac{n \alpha^2} {(\alpha-\beta)\beta}\right].
$$
Recall that our constants $\alpha, \beta, c$ must satisfy $\alpha>\beta>0$ along with (\ref{cond1}):
$$
\alpha (p-1) - c p + \beta + \frac{4(\alpha-\beta)\beta c}{n \alpha^2 (p-1)} \geq 0,\ \ \ \  \frac{4(\alpha-\beta)c}{n \alpha^2} - (p-1) \geq 0.
$$
These latter two inequalities can be satisfied as long as we choose $c$ such that
$$
\frac{\alpha(p-1)+2\beta}{p}\geq c \geq \frac{(p-1)n\alpha^2}{4(\alpha-\beta)}.
$$
For given $n,p$, we may choose such $c$ as long as we have
\begin{equation}\label{condnp}
\frac{4(\alpha(p-1)+2\beta)(\alpha-\beta)}{\alpha^2} \geq p(p-1)n.
\end{equation}

Our choice of constants $\alpha, \beta, a, c$ now implies  that the right hand side of \eqref{eq6} is in fact positive, which is a contradiction.  Assuming the solution exists in all of space $\R^n$, we can let $R \to \R^n$ so that $\phi_R \to a/t$. %Now we can ignore $b$ and let $\beta$ go to $0$. 
This completes the proof of Theorem \ref{maintheorem}.
\end{proof}

%%%%%%%%%%%%%
\section{Applications}%%%
%%%%%%%%%%%%%
In this section, we shall give a few applications of Theorem \ref{maintheorem}. We first recover and improve estimate in \cite{hamilton11}; then we use it to study long time existence problem of (\ref{gese}); finally we integrate along space-time curve to derive a classical Harnack inequality. 

\subsection{Hamilton's result}
In this subsection, we study the case of $n=1$ and $p=2$, which was studied  in  \cite{hamilton11} by Hamilton. In particular, we apply Theorem \ref{maintheorem} with $n=1$ and $p=2$  by picking $\alpha=1$, $\beta=0$, $c=\frac{1}{2}$, and $a=\frac{2}{3}$, to conclude
$$
\Delta u +\frac{1}{2}e^{u} +\frac{2}{3t} \geq 0.
$$
Recalling that $u=\log f$, we  recover \cite[Theorem 5.1]{hamilton11}: 

\begin{thm}[Hamilton \cite{hamilton11}] 
Let $f$ be a positive solution to \eqref{gese} with $n=1$, $p=2$. Then
\begin{equation}\label{hharnack}
f_t + \frac{2f}{3t} \geq \frac{f_x^2}{f} + \frac{f^2}{2}.
\end{equation}
\end{thm}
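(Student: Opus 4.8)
The plan is to apply Theorem \ref{maintheorem} directly with the specific choice of constants $\alpha = 1$, $\beta = 0$, $c = \frac12$, $a = \frac23$, specialized to $n = 1$ and $p = 2$, and then to rewrite the resulting inequality for $u = \log f$ as an inequality for $f$ itself. Recall from the beginning of Section 2 that $u$ satisfies $u_t = \Delta u + |\nabla u|^2 + e^{u(p-1)}$, which for $n=1$, $p=2$ becomes $u_t = u_{xx} + u_x^2 + e^u$; I will use this relation to eliminate $\Delta u = u_{xx}$ at the very end.

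First I would check that the chosen constants are admissible. With $n=1$, $p=2$, condition \eqref{cond} requires $1 > 0 \ge 0$ together with $\frac12 = \frac{\alpha(p-1)+2\beta}{p} \ge c \ge \frac{(p-1)n\alpha^2}{4(\alpha-\beta)} = \frac14$, so $c = \frac12$ is allowed; condition \eqref{conda} requires $a \ge \frac{n\alpha^2}{2(\alpha-\beta)} = \frac12$, satisfied by $a = \frac23$; and the dimensional restriction \eqref{condnp} reads $\frac{4(\alpha(p-1)+2\beta)(\alpha-\beta)}{\alpha^2} = 4 \ge p(p-1)n = 2$. Hence Theorem \ref{maintheorem} applies and yields $\Delta u + \frac12 e^{u} + \frac{2}{3t} \ge 0$ for all $t>0$.

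It then remains to translate this into \eqref{hharnack}. Substituting $\Delta u = u_t - u_x^2 - e^u$ gives $u_t - u_x^2 - \frac12 e^u + \frac{2}{3t} \ge 0$, and since $u = \log f$ yields $u_t = f_t/f$, $u_x^2 = f_x^2/f^2$, and $e^u = f$, multiplying through by $f > 0$ produces $f_t - \frac{f_x^2}{f} - \frac{f^2}{2} + \frac{2f}{3t} \ge 0$, which is exactly \eqref{hharnack}. I do not expect any genuine obstacle here beyond this bookkeeping: the analytic content is entirely inside Theorem \ref{maintheorem}, whose conclusion in the stated form already absorbs the exhaustion $R \to \R$ of the barrier functions $\phi_R$ and the limit $\beta \to 0$. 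The only points requiring a little care are verifying the three constraint sets \eqref{cond}, \eqref{conda}, \eqref{condnp} simultaneously for the proposed constants (done above) and performing the routine algebraic passage from the $u$-form to the $f$-form.
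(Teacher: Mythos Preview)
Your proposal is correct and follows exactly the paper's own argument: apply Theorem~\ref{maintheorem} with $\alpha=1$, $\beta=0$, $c=\tfrac12$, $a=\tfrac23$ at $n=1$, $p=2$, obtain $\Delta u+\tfrac12 e^u+\tfrac{2}{3t}\ge 0$, and rewrite in terms of $f$. Your extra care in explicitly verifying \eqref{cond}, \eqref{conda}, and \eqref{condnp} and spelling out the passage from the $u$-inequality to \eqref{hharnack} is more detailed than the paper's terse derivation, but the content is identical.
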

Furthermore, our proof in fact shows that \eqref{hharnack} can be improved by picking $a=\frac12$ and $c=\frac14$ to get 
$$
\Delta u +\frac{1}{4}e^{u} +\frac{1}{2t} \geq 0,
$$
yielding
\begin{equation}\label{hharnack2}
f_t + \frac{f}{2t} \geq \frac{f_x^2}{f} + \frac{3f^2}{4}.
\end{equation}

\begin{remark}In \cite{hamilton11}, Hamilton asks if it is possible to improve (\ref{hharnack}).
The above (\ref{hharnack2}) gives an affirmative answer to that question.
\end{remark}

If the dimension $n=2$, and $p=2$, a similar Harnack estimate can be derived  by picking $\alpha=1$, $\beta=0$, $a=1$ and $c=\frac12$, which we state as our next theorem.
\begin{thm}
Let $f$ be a positive solution to \eqref{gese} with $n=2$, $p=2$. Then
\begin{equation}\label{hharnack3}
f_t + \frac{f}{2t} \geq \frac{f_x^2}{f} + \frac{3f^2}{4}.
\end{equation}
\end{thm}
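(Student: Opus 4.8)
The plan is to read off \eqref{hharnack3} as a corollary of Theorem \ref{maintheorem} specialized to $n=p=2$; accordingly the only content is (i) checking that the proposed constants $\alpha=1$, $\beta=0$, $a=1$, $c=\tfrac12$ are admissible, and (ii) performing the logarithmic change of variables back to $f$.

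For step (i): with $\alpha=1$, $\beta=0$, $n=p=2$, the chain \eqref{cond} becomes $1>0$ together with $\tfrac12=\tfrac{\alpha(p-1)+2\beta}{p}\ge c\ge\tfrac{(p-1)n\alpha^2}{4(\alpha-\beta)}=\tfrac12$, so the admissible $c$-window degenerates to the single point $c=\tfrac12$; and \eqref{conda} becomes $a\ge\tfrac{n\alpha^2}{2(\alpha-\beta)}=1$, satisfied (optimally) by $a=1$. Because the proof of Theorem \ref{maintheorem} uses $\beta>0$ in the quadratic-completion step (the constant $B$ there blows up as $\beta\to0$), I would instead invoke the theorem for each small $\beta>0$ — for which the window $\big[\tfrac{1}{2(1-\beta)},\tfrac{1+2\beta}{2}\big]$ for $c$ is nonempty and the lower bound $\tfrac{1}{1-\beta}$ for $a$ is near $1$ — obtaining $\Delta u+\beta|\nabla u|^2+c_\beta e^{u}+\tfrac{a_\beta}{t}\ge0$ with $c_\beta\to\tfrac12$, $a_\beta\to1$, and then let $\beta\downarrow0$ as in the first remark following Theorem \ref{maintheorem}. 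Pointwise in $(x,t)$ the term $\beta|\nabla u|^2$ drops out, producing
$$\Delta u+\tfrac12 e^{u}+\tfrac1t\ge0 .$$

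For step (ii): with $u=\log f$ one has $\Delta u=\Delta f/f-|\nabla f|^2/f^2$ and $e^{u}=f$; multiplying the displayed inequality by $f>0$ and eliminating $\Delta f$ through the equation $f_t=\Delta f+f^2$ from \eqref{gese} yields a pointwise gradient estimate of the form $f_t+\tfrac{af}{t}\ge \tfrac{|\nabla f|^2}{f}+(1-c)f^2$, which for $a=1$, $c=\tfrac12$ is exactly \eqref{hharnack3}.

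I do not anticipate a genuine obstacle: the statement is a specialization of an already-proved theorem, and the final translation is the same logarithmic computation used above to recover Hamilton's estimate. The only two points that actually require care are observing that the admissibility interval for $c$ collapses to a point when $n=p=2$ (so there is no slack in the constants, in contrast to the $n=1$ case where the sharper estimate \eqref{hharnack2} was available) and carrying the $\beta>0\to\beta=0$ limiting argument through cleanly.
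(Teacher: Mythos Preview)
Your approach is exactly the paper's: the sentence immediately preceding the theorem says the estimate is obtained ``by picking $\alpha=1$, $\beta=0$, $a=1$ and $c=\tfrac12$'' in Theorem~\ref{maintheorem}, and your handling of the $\beta\downarrow0$ limit matches the remark following that theorem.

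There is, however, an arithmetic slip in your last sentence. Substituting $a=1$, $c=\tfrac12$ into your own general formula $f_t+\tfrac{af}{t}\ge \tfrac{|\nabla f|^2}{f}+(1-c)f^2$ gives
\[
f_t+\frac{f}{t}\ \ge\ \frac{|\nabla f|^2}{f}+\frac12 f^2,
\]
not the displayed \eqref{hharnack3}, whose coefficients $\tfrac{1}{2t}$ and $\tfrac34$ coincide verbatim with the sharpened $n=1$ estimate \eqref{hharnack2} (there obtained with $a=\tfrac12$, $c=\tfrac14$). As you yourself noted, for $n=p=2$ the admissible window collapses to $c=\tfrac12$ and $a\ge1$, so those sharper constants are \emph{not} available. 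Either \eqref{hharnack3} is a typographical carry-over from \eqref{hharnack2} and the intended conclusion is the inequality displayed above, or the statement as printed is not what your argument (nor the paper's indicated choice of constants) actually proves.
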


%%%%%%%%%%%%%%%%%%%%
\subsection{Finite-time Blow Up}%%%%
%%%%%%%%%%%%%%%%%%%%

In this subsection, we reprove Fujita's result \cite[Theorem 1]{fujita66}, which states that if $0<n(p-1)<2$, then any positive solution $f$ to \eqref{gese} will blow up in finite time, however small the positive initial value may be. We remark that Fujita also shows in the same paper that the condition $n(p-1)>2$ implies there exists some small positive initial data such that the solution exists for all time. For blow up in the case $n(p-1)=2$, see V. Galaktionov \cite{galak94}.\\

We in fact start by proving the following weaker version. Recall that $p>1$, so the lower bound for $n(p-1)$ in the next statement is always satisfied.

\bp 
Suppose that $f$ is a positive solution to $\eqref{gese}$, and $c$ is a constant satisfying $0<n(p-1)\leq c < 2$. Then $f$ blows up in finite time provided that 
\begin{equation} \label{lowbound}
f(x_0, t_0) \geq \left (\frac{4n}{2-c} \right )^{1/(p-1)}
\end{equation}
 at some point $(x_0, t_0)$.
\ep
\begin{proof}
Picking $\alpha=2$, $\beta=1$, $a=2n$ and $c$ such that $(p-1)n \leq c < 2$ in Theorem \ref{maintheorem} yields
%$$
%2 \Delta u + | \nabla u |^2 + c e^{u(p-1)} +\frac{2n}{t} \geq 0,
%$$
%which, after substituting back $f = e^u$, becomes
$$
2 \Delta f - \frac{| \nabla f |^2}{f}  + c f^p +\frac{2n}{t}f \geq 0.
$$
Since $f_t=\Delta f+f^p$, we have
$$
2f_t - \frac{|\nabla f|^2}{f} + \frac{2nf}{t} \geq  (2-c)f^p.
$$
Hence
$$
2f_t + \frac{2nf}{t} \geq (2-c)f^p.
$$
The above inequality implies that
\begin{equation}\label{inverse}
2\frac{\partial}{\partial t} \left ( \frac{1}{f} \right ) \leq  \frac{1}{f} \left  ( \frac{2n}{t} - (2-c)f^{p-1}  \right ) =\frac{1}{f^{2-p}} \left  (\frac{2n}{t f^{p-1}} - (2-c)  \right ).
\end{equation}
Without loss of generality, we may assume that $f\geq (\frac{4n}{2-c})^{1/(p-1)}$ at the origin $x_0=0$ for $t_0=1$. This assumption together with (\ref{inverse}) gives
$$
2\frac{\partial}{\partial t} \left ( \frac{1}{f} \right )(0,t)\leq \frac{2-c}{f^{2-p}(0,t)} \left (\frac{1}{2t} - 1 \right )< 0,
$$
so that $f(0, t)$ is strictly increasing for $t\geq 1$ provided that $f(0, t)$ is finite. 

Now if $p>2$, then $f^{p-2}(0,t) \geq f^{p-2}(0,1)$ for $t \geq 1$ and (\ref{inverse}) becomes
$$
2\frac{\partial}{\partial t} \left ( \frac{1}{f} \right )(0,t) \leq \frac{2n}{tf(0,1)}-(2-c)f^{p-2}(0,1).
$$
On the other hand, if $1<p\leq 2$, manipulation of (\ref{inverse}) yields
\begin{equation*}\label{pl2}
\frac{2}{p-1} \frac{\partial }{\partial t} \left [ \left ( \frac{1}{f} \right )^{p-1} \right ](0,t)=2f^{2-p} \frac{\partial}{\partial t} \left ( \frac{1}{f} \right ) (0,t) \leq \frac{2n}{t f^{p-1}(0,1)} - (2-c) .
\end{equation*} 
In both cases, there exists  $\delta>0$, when $t$ is large enough, the right hand sides are less than $-\delta<0$,  so that $\frac{1}{f} \to 0$ in finite time. This proves our proposition.
\end{proof}

Intuitively, the above result says that if the population is ever large enough somewhere, then it will become unbounded in finite time. Through a parabolic rescaling argument, without loss of generality, we can always assume a given positive solution $f$ satisfies the condition \eqref{lowbound} of the previous proposition at some point. To see this, let $\lambda >0$ and $\delta \in \R$, define a new function $\tilde{f}(\tilde{x},\tilde{t}) := \lambda^\delta f(x, t)$, where $\tilde{x} := \lambda x$, $\tilde{t} := \lambda^2 t$. Then we can see that with the choice of $\delta := -\frac{2}{p-1}$, $\tilde{f}$ also satisfies equation \eqref{gese}: 
\begin{equation}
\label{rescale}
\frac{\partial}{\partial \tilde{t}}\tilde{f} = \lambda^{\delta-2} \left (\frac{\partial}{\partial t}f \right ) =\lambda^{\delta-2} (\Delta f)+ \frac{\lambda^{\delta-2}}{\lambda^{\delta p}} (\lambda^\delta f)^p = \tilde{\Delta} \tilde{f} + \tilde{f}^p.
\end{equation}
In particular, $\lambda>0$ is arbitrary and can be chosen so that condition \eqref{lowbound} is met by $\tilde{f}$. Once $\lambda$ is chosen and fixed,  $\tilde{f}$ remains bounded in finite time if and only if $f$ does. We have thus proved the following theorem, which says that no matter how small the initial population is, it will become unbounded in finite time.

\begin{thm}[Fujita \cite{fujita66}] \label{mainthm}
Let $0<n(p-1) < 2$. Then any positive solution $f$ to the equation $\eqref{gese}$ blows up in finite time.
\end{thm}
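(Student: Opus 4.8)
The plan is to reduce Theorem \ref{mainthm} to the Proposition using the scaling invariance of \eqref{gese}. Suppose, for contradiction, that $f$ is a positive solution defined on all of $\R^n \times [0,\infty)$. Since $0 < n(p-1) < 2$, we may fix a constant $c$ with $n(p-1) \le c < 2$ (for instance $c = n(p-1)$), so that the hypothesis $0 < n(p-1) \le c < 2$ of the Proposition holds; it then remains only to arrange that $f$ --- or a solution obtained from $f$ by an admissible transformation --- satisfies the pointwise lower bound \eqref{lowbound} at some point.

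First I would record the parabolic rescaling: for $\lambda > 0$ set $\tilde f(\tilde x,\tilde t) := \lambda^{\delta} f(\lambda^{-1}\tilde x,\lambda^{-2}\tilde t)$ with $\delta = -\frac{2}{p-1}$. The computation \eqref{rescale} shows that $\tilde f$ is again a positive solution of \eqref{gese} on $\R^n\times[0,\infty)$, and since for each fixed $\lambda$ the map $(x,t)\mapsto(\lambda x,\lambda^2 t)$ is a diffeomorphism of $\R^n\times[0,\infty)$, the function $\tilde f$ is bounded on every finite time interval if and only if $f$ is. Now pick any base point, say $(0,1)$, and write $M := f(0,1) > 0$. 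Then $\tilde f(0,\lambda^2) = \lambda^{\delta} M = \lambda^{-2/(p-1)} M$, and because $\delta < 0$ this tends to $+\infty$ as $\lambda \to 0^+$. Hence we may choose $\lambda$ small enough that $\tilde f(0,\lambda^2) \ge \big(\frac{4n}{2-c}\big)^{1/(p-1)}$, so that $\tilde f$ meets the hypothesis of the Proposition at the point $(x_0,t_0) = (0,\lambda^2)$ (the equation is autonomous, so the internal normalization $t_0 = 1$ in the proof of the Proposition costs nothing).

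Applying the Proposition, $\tilde f$ blows up in finite time, contradicting the assumed global existence of $\tilde f$, equivalently of $f$. This proves the theorem. I do not expect a genuine obstacle here: the only points requiring care are that the scaling exponent $\delta = -\frac{2}{p-1}$ is forced by demanding that the $\Delta f$ and $f^p$ terms transform compatibly (exactly the content of \eqref{rescale}), and that one must dilate \emph{toward} $t=0$, i.e. $\lambda \to 0^+$, in order to amplify rather than damp the solution. All of the substantive analytic work has already been done --- in Theorem \ref{maintheorem} and in the Proposition --- so the proof of Theorem \ref{mainthm} is essentially this bookkeeping.
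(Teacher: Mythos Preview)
Your proposal is correct and matches the paper's argument essentially verbatim: both reduce Theorem~\ref{mainthm} to the preceding Proposition via the parabolic rescaling $\tilde f(\tilde x,\tilde t)=\lambda^{-2/(p-1)}f(\lambda^{-1}\tilde x,\lambda^{-2}\tilde t)$, choosing $\lambda$ so that the pointwise lower bound \eqref{lowbound} is met. Your write-up is in fact slightly more explicit than the paper's --- you name the base point, compute $\tilde f(0,\lambda^2)=\lambda^{-2/(p-1)}f(0,1)$, and note that $\lambda\to 0^+$ is the amplifying direction --- whereas the paper simply asserts that $\lambda$ can be so chosen.
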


\subsection{Classical Harnack Inequality}
%%%%%%%%%%%%%%%%
In this subsection,  we shall integrate our differential Harnack (\ref{harnack}) along a space-time path to derive  a classical Harnack type inequality, which provides a comparison of values 
of positive solutions at different points in space-time. %For simplicity, we shall take $\alpha=2$, $\beta=1$, $a=2n$ and $c=2$. 

\bc \label{cor.class}
Let $f$ be a positive solution to the generalized endangered species equation \eqref{gese} and $u = \log f$.  Let $\gamma(t) = (x(t),t)$, $t \in [t_1,t_2]$, be a space-time curve joining  two given points $(x_1,t_1), (x_2,t_2) \in \R^n \times [0, \infty)$ with $0 < t_1<t_2$. Assume further that $\alpha \geq 2 \beta$ so  $c \leq \alpha$, and  $a = \frac{n \alpha^2}{2(\alpha-\beta)}\leq n \alpha$. Then we have
\begin{equation}\label{intharnack}
f(x_1,t_1) \leq f(x_2,t_2) \left( \frac{t_2}{t_1} \right)^n \exp{\left[\frac{|x_2-x_1|^2}{2(t_2-t_1)}\right]}.
\end{equation}
%where
%$$
%\Gamma \equiv \inf_{\gamma(t) = (x(t),t)} \int_{t_1}^{t_2} \left[ \frac{1}{2} | \dot{x}|^2+ \frac{n}{t} \right],
%$$
%the infimum being taken over all such paths $\gamma$.
\ec
\bpf
Recall the evolution equation for $u = \log f$ is
$$
u_t = \Delta u + | \nabla u |^2+e^{u(p-1)}.
$$
By our differential Harnack inequality (\ref{harnack}), we have $H\geq0$, this yields that 
$$
\begin{aligned}
\Delta u 
\geq \alpha^{-1} (- \beta | \nabla u |^2 - c e^{u(p-1)} - \frac{a}{t}).
\end{aligned}
$$
We then compute the evolution of $u$ along $\gamma$:
$$
\begin{aligned}
\frac{d}{d t} [u(x(t),t)] &=\nabla u \cdot \dot{x} + u_t  \\
& = \nabla u \cdot \dot{x} + \Delta u + | \nabla u |^2+e^{u(p-1)} \\
& \geq | \nabla u |^2 \left (1-\frac{\beta}{\alpha} \right ) + \nabla u \cdot \dot{x} - \frac{a}{\alpha t} + e^{u(p-1)}\left (1-\frac{c}{\alpha} \right ) \\
& \geq | \nabla u |^2 \left (\frac{1}{2}-\frac{\beta}{\alpha} \right ) - \frac{1}{2} | \dot{x}|^2 - \frac{a}{\alpha t} + e^{u(p-1)}\left (1-\frac{c}{\alpha} \right ) \\
& \geq  - \frac{1}{2} | \dot{x}|^2- \frac{a}{\alpha t}, \\
\end{aligned}
$$
where we have used the assumption $\alpha \geq 2 \beta$ with  $c \leq \alpha$ for the last inequality. Hence, we have
\begin{equation}\label{curve}
\frac{d}{d t} [-u(x(t),t)] \leq  \frac{1}{2} | \dot{x}|^2+ \frac{n}{t}.
\end{equation}
Integrating the above inequality \eqref{curve} along $\gamma$, and taking the infimum over all such space-time paths yields
$$
u(x_1,t_1) - u(x_2,t_2) \leq \inf_{\gamma(t) = (x(t),t)} \int_{t_1}^{t_2} \left[ \frac{1}{2} | \dot{x}|^2+ \frac{n}{ t} \right] .
$$
Recalling that $u = \log f$, we arrive at \eqref{intharnack}. 
\epf

%\begin{remark}
%does this imply every points blows up same time?
%\end{remark}
\def\cprime{$'$}
\bibliographystyle{plain}

%\bibliographystyle{alpha}
%\bibliography{bio}

%\end{thebibliography}
\end{document}